\documentclass[preprint,12pt]{elsarticle}
\usepackage{graphicx}
\usepackage{amssymb}
\usepackage{amsmath}
\usepackage{amsthm}
\usepackage{tikz}
\usepackage{url}
\usepackage[utf8]{inputenc}

\newcommand{\ad}{\operatorname{ad}}
\newtheorem{defi}{Definition}
\newtheorem{exmp}[defi]{Example}
\newtheorem{prop}{Proposition}
\newtheorem{thm}{Theorem}
\newtheorem{lem}[thm]{Lemma}
\newtheorem{cor}[thm]{Corollary}
\newdefinition{rmk}{Remark}
\newdefinition{notation}{Notation}
\newproof{pf}{Proof}

\usepackage[a4paper, total={6in, 8in}]{geometry}

\begin{document}

\begin{frontmatter}


\title{Operational Calculus on Curved Differentials: Optimal N-Complex Bounds and Persistent Homology}


\author{Mauricio Angel}

\begin{abstract}
We establish a canonical normal form for the iterates of a curved differential in curved differential algebras (CDA), proving that $d^{2m} = (ad_c)^m$ and $d^{2m+1} = (ad_c)^m \circ d$. This operator calculus clarifies the underlying algebraic structure of CDAs and bypasses the need for complex combinatorics. Using this framework, we provide sharp criteria for curvature constraints to induce N-complex structures. We demonstrate that, while the nilpotency of the curvature element ($c^n=0$) is insufficient to bound the nilpotency of $d$ to $2n$, it fundamentally guarantees a strict $(4n-2)$-complex structure. On the applied side, we model curvature as a \emph{filtration controller} on a genuine square–zero chain complex. This places us under the standard persistence stability framework
and yields a Lipschitz control of barcodes with respect to degreewise curvature variation. A reproducible toy example on a four–vertex flag complex illustrates the mechanism.
\end{abstract}

\begin{keyword}
curved dg-algebra \sep Inner derivation\sep $N$-complex\sep Persistent homology.

MSC 2020: 16E45\sep 18G25\sep 55N31.
\end{keyword}

\end{frontmatter}



\section{Introduction}

Curved differential algebras (CDA) arise as a natural extension of differential graded algebras (DGA), incorporating the notion of curvature into the differential operator. Unlike traditional differentials that strictly satisfy the square-zero condition ($d^2=0$), the curvature $c$ in a CDA dictates that $d^2 = [c, -]$. This enrichment of the algebraic structure provides a versatile framework to study phenomena in algebraic topology, deformation theory, and homological algebra where geometric or algebraic obstructions prevent a flat differential.

The foundational theory of curved algebras has been significantly advanced by Keller  \cite{KellerOn}, Nicolas \cite{Nicolas2008bar}, and Brzeziński \cite{Brzezinski2013Curved}, who established their connections with derived categories and corings. Lazarev ~\cite{Lazarev2012} and Căldăraru \cite{Cldraru2010CurvedAA} further explored their characteristic classes and Hochschild homology, and Pauksztello~\cite{Pauksztello2009} characterizes when a differential graded R-S-bimodule induces a full embedding of derived categories. These papers collectively suggest that curved differential graded algebras generalize other algebras by providing a framework for studying algebraic structures in a more general and flexible way.

However, a persistent technical challenge in working with CDAs has been the algebraic complexity of the higher iterates of the differential, $d^k$. Naive expansions of $d^k$ generate nested commutators and derivations that obscure the underlying homological properties.

In this paper, we resolve this obstruction by establishing a canonical normal form for the iterates of a curved differential (Theorem 3). We prove that every iterate $d^k$ factors uniquely as a power of the inner derivation $ad_c$ followed by at most one copy of $d$. This structural result drastically simplifies computations and reveals how curvature constraints naturally induce $N$-complex structures (where $d^N=0$).A central contribution of this work is the precise identification of the bounds for such $N$-complexes. We demonstrate through a sharp counterexample (Section 4.3) that the mere nilpotency of the curvature element ($c^n=0$) does not yield the naive bound $d^{2n}=0$. Rather than the naive $2n$-bound, our normal form combined with degreewise left/right nilpotency yields the universal $(4n-2)$-bound.
The free example with $n=2$ shows that this exponent is sharp in that case; for general $n$, optimality remains open.

Finally, to connect with persistent homology, we refrain from curved-to-dg lifts and instead
let curvature drive the filtration on an honest chain complex. Classical stability then applies
verbatim, delivering an interleaving/bottleneck bound controlled by the $\ell^\infty$ shift of the
filtration (Theorem~10; Corollary~11).
 
By connecting CDAs to corings \cite{Brzezinski2013Curved}, $N$-complexes \cite{AngelD:2007}, and persistent homology, this work establishes a new research program at the intersection of homological algebra and topological data science.

\section{Conventions and Setting}\label{sec:conventions}

Throughout we fix a field $k$ of characteristic $0$. 
All graded components $A^i$ of our (associative, unital) $\mathbb{Z}$-graded algebra 
$A^\bullet=\bigoplus_{i\in\mathbb{Z}}A^i$ are finite-dimensional $k$-vector spaces.
We consider a \emph{curved dg-algebra} $(A^\bullet,m,d,c)$ in the sense that
\begin{equation}\label{eq:curved-def}
|d|=1,\qquad |c|=2,\qquad d^2=\ad_c:=[c,-],\qquad d(c)=0,
\end{equation}
where $[x,y]=xy-(-1)^{|x||y|}yx$ is the graded commutator and $m$ denotes the product.
We use the standard Koszul sign rule. 

\paragraph{Inner derivations and degreewise norms.}
For each $i$ we fix an operator norm $\|\cdot\|_{i}$ on $A^i$ and define
\[
\|\ad_c\|_{i\to i+2}:=\sup_{0\neq a\in A^i}\frac{\|[c,a]\|_{i+2}}{\|a\|_i}.
\]
Unless stated otherwise, norms on $\bigoplus_i A^i$ are taken to be the $\ell^\infty$-sum 
of the degreewise norms. When we refer to ``boundedness of $\ad_c$'' we mean 
$\sup_i\|\ad_c\|_{i\to i+2}<\infty$.

\paragraph{Graded commutator identities.}
For a homogeneous $x$ we write $\ad_x=[x,-]$. We repeatedly use
\begin{equation}\label{eq:super-Leibniz}
d(xy)=d(x)\,y+(-1)^{|x|}x\,d(y),\qquad [d,\ad_x]=\ad_{d(x)}.
\end{equation}
In particular, from \eqref{eq:curved-def} we obtain $[d,\ad_c]=\ad_{d(c)}=0$, 
so $d$ and $\ad_c$ \emph{super-commute} as graded derivations of degrees $1$ and $2$ respectively.

\paragraph{Terminology.}
Following Positselski, curved differential graded algebras are often abbreviated as \emph{CDG-algebras} (see \cite{Positselski2011,PositselskiBLMS2023}); our notation $d^2=\ad_c$ fits this convention, with $c$ playing the role of the curvature element.

\section{Iterates of a Curved Differential}\label{sec:iterates}

We recall from §\ref{sec:conventions} that a curved dg-algebra is a quadruple
$(A^\bullet,m,d,c)$ over a field $k$ of characteristic $0$ with $|d|=1$, $|c|=2$,
\begin{equation}\label{eq:curved-again}
d^2=\ad_c:=[c,-],\qquad d(c)=0,
\end{equation}
and the graded commutator $[x,y]=xy-(-1)^{|x||y|}yx$. We also use the graded Leibniz rule and the identity
$[d,\ad_x]=\ad_{d(x)}$ for homogeneous $x$.

\subsection{Structural commutation and a reduction principle}
The following observation underlies all computations with iterates of $d$.

\begin{lem}[Super-commutation of $d$ and $\ad_c$]\label{lem:super-commutation}
In any curved dg-algebra \emph{(}\eqref{eq:curved-again}\emph{)} one has
\[
[d,\ad_c]=\ad_{d(c)}=0.
\]
In particular, $d$ and $\ad_c$ commute as endomorphisms of the graded vector space $A^\bullet$.
\end{lem}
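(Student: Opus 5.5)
The plan is to compute the graded commutator of operators directly on homogeneous elements. The first step is to note that, since $|d|=1$ and $|\ad_c|=2$, the operator commutator is
\[
[d,\ad_c]=d\circ\ad_c-(-1)^{1\cdot 2}\ad_c\circ d=d\circ\ad_c-\ad_c\circ d .
\]
So the super-commutator and the ordinary commutator coincide here. Hence the final clause of the statement, that $d$ and $\ad_c$ commute as plain endomorphisms, will follow at once from the vanishing of $[d,\ad_c]$.

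Next I verify the general identity $[d,\ad_c]=\ad_{d(c)}$ from the graded Leibniz rule \eqref{eq:super-Leibniz}. For homogeneous $a$, since $|c|=2$ the sign in $[c,a]$ is $(-1)^{2|a|}=1$, so $[c,a]=ca-ac$. Applying the Leibniz rule term by term gives
\[
d([c,a])=d(c)\,a+c\,d(a)-d(a)\,c-(-1)^{|a|}a\,d(c).
\]
Subtracting $\ad_c(d(a))=c\,d(a)-d(a)\,c$ leaves $d(c)\,a-(-1)^{|a|}a\,d(c)$. Because $|d(c)|=3$, we have $(-1)^{3|a|}=(-1)^{|a|}$, so this expression is exactly $[d(c),a]=\ad_{d(c)}(a)$. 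The axiom $d(c)=0$ from \eqref{eq:curved-again} then gives $[d,\ad_c]=0$.

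As a cross-check that avoids sign bookkeeping entirely, I will also record the one-line argument $d\circ\ad_c=d\circ d^2=d^3=d^2\circ d=\ad_c\circ d$, using only $d^2=\ad_c$. This shows the commutation holds even before invoking $d(c)=0$. Comparing it with the computation above shows the two hypotheses in \eqref{eq:curved-again} are consistent: $\ad_{d(c)}=0$ is forced.

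The only delicate point is tracking the Koszul signs, in particular the factor $(-1)^{|a|}$ coming from moving $d$ past $a$. The identity $(-1)^{3|a|}=(-1)^{|a|}$ is what makes the leftover terms assemble into $\ad_{d(c)}$. The $d^3$ argument serves as the safeguard against a sign slip.
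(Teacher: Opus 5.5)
Your proof is correct, and its main line is the paper's. The paper cites the standard identity $[d,\ad_x]=\ad_{d(x)}$ and then uses $d(c)=0$. You instead verify that identity for $x=c$ from the graded Leibniz rule, and you make explicit the sign $(-1)^{1\cdot 2}=1$ on which the paper's ``in particular'' clause tacitly relies.

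Your $d^3$ cross-check, however, is a genuinely different and more elementary route. The chain $d\circ\ad_c=d^3=\ad_c\circ d$ uses only $d^2=\ad_c$ and associativity of composition; it needs neither the derivation property nor $d(c)=0$. Combined with your Leibniz computation, it shows that $\ad_{d(c)}=0$ holds automatically: $d(c)$ is graded-central whenever a degree-$1$ derivation satisfies $d^2=\ad_c$. So neither this lemma nor Theorem~\ref{thm:normal-form} actually depends on the axiom $d(c)=0$.

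This is sharper than the paper's Remark~\ref{rem:hypotheses}, which says the commutation ``crucially uses'' $d(c)=0$ and that the normal form may fail without it. Your argument shows that remark is mistaken. What the Leibniz route adds over the $d^3$ route is the explicit identification of the commutator with $\ad_{d(c)}$, which is the middle term the statement asserts.
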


\begin{proof}
The standard identity $[d,\ad_x]=\ad_{d(x)}$ gives $[d,\ad_c]=\ad_{d(c)}$, which vanishes by $d(c)=0$.
\end{proof}

The quadratic relation $d^2=\ad_c$ together with Lemma~\ref{lem:super-commutation} implies that
\emph{every} word in the letter $d$ can be reduced to a canonical normal form. For later use we isolate
the following consequence.

\begin{lem}[Commutation with powers of $\ad_c$]\label{lem:commute-powers}
For all $m\ge 0$ one has $d\,(\ad_c)^m=(\ad_c)^m d$.
\end{lem}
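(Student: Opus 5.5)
The plan is to argue by induction on $m$, using the plain commutation $d\,\ad_c=\ad_c\,d$ as the only input. For $m=0$ the identity reads $d=d$. For $m=1$ it is Lemma~\ref{lem:super-commutation}. There the graded commutator of the degree-$1$ map $d$ with the degree-$2$ map $\ad_c$ carries the sign $(-1)^{1\cdot 2}=+1$, so $[d,\ad_c]=d\,\ad_c-\ad_c\,d$, and its vanishing is exactly the ordinary commutation.

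There is also a direct route for $m=1$ that needs neither the derivation identity $[d,\ad_x]=\ad_{d(x)}$ nor $d(c)=0$. By \eqref{eq:curved-again} we have $\ad_c=d^2$, so
\[
d\,\ad_c=d\,d^2=d^3=d^2\,d=\ad_c\,d
\]
by associativity of composition of endomorphisms of $A^\bullet$. I would record this observation as a remark, since it shows the lemma is really a formal consequence of the quadratic relation alone.

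For the inductive step, assume $d\,(\ad_c)^m=(\ad_c)^m d$. Then
\[
d\,(\ad_c)^{m+1}=\bigl(d\,(\ad_c)^m\bigr)\ad_c=(\ad_c)^m\,(d\,\ad_c)=(\ad_c)^m\,\ad_c\,d=(\ad_c)^{m+1}d.
\]
The first equality uses the induction hypothesis and the second uses the case $m=1$. Equivalently, one can just write $d\,(\ad_c)^m=d\cdot d^{2m}=d^{2m+1}=d^{2m}\cdot d=(\ad_c)^m d$ and dispense with the induction entirely.

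I do not expect a genuine obstacle here. The only point requiring care is the sign convention: one must confirm that the graded commutator in Lemma~\ref{lem:super-commutation} reduces to the ordinary commutator, which it does because $|\ad_c|$ is even. The alternative argument via $\ad_c=d^2$ bypasses even this check.
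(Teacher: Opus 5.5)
Your main argument is the paper's proof. The paper simply says that $d$ and $\ad_c$ commute by Lemma~\ref{lem:super-commutation}, hence $d$ commutes with every power $(\ad_c)^m$. Your induction, with the correct check that the Koszul sign $(-1)^{1\cdot 2}=+1$ turns the graded commutator into an ordinary one, just spells this out.

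Your alternative, $d\,(\ad_c)^m=d\circ d^{2m}=d^{2m+1}=d^{2m}\circ d=(\ad_c)^m d$, is a genuinely different and more economical route. It uses only the operator identity $d^2=\ad_c$ and associativity of composition. It needs neither that $d$ be a derivation, nor the identity $[d,\ad_x]=\ad_{d(x)}$, nor the axiom $d(c)=0$.

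Combined with $[d,\ad_x]=\ad_{d(x)}$, your observation shows that $d^2=\ad_c$ by itself forces $\ad_{d(c)}=[d,\ad_c]=[d,d^2]=d^3-d^3=0$. So Remark~\ref{rem:hypotheses}, which says the commutation crucially uses $d(c)=0$, overstates that axiom's role. Likewise, the normal form of Theorem~\ref{thm:normal-form} is just $d^{2m}=(d^2)^m$.

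The paper's route works under different hypotheses: any odd-degree derivation $d$ with $d(c)=0$ commutes with $\ad_c$, whether or not $d^2=\ad_c$ holds. In the curved dg setting both sets of hypotheses are available, so both of your arguments are valid.
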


\begin{proof}
By Lemma~\ref{lem:super-commutation} the operators $d$ and $\ad_c$ commute. Hence so do $d$ and $(\ad_c)^m$.
\end{proof}

\subsection{Normal form for $d^k$}
We can now state and prove the main structural result of this section.

\begin{thm}[Normal form for the iterates of $d$]\label{thm:normal-form}
Let $(A^\bullet,m,d,c)$ satisfy \eqref{eq:curved-again}. Then, as endomorphisms of $A^\bullet$,
\begin{equation}\label{eq:normal-form}
d^{2m}=(\ad_c)^m,\qquad d^{2m+1}=(\ad_c)^m\circ d\qquad \text{for all } m\ge 0.
\end{equation}
Equivalently, every iterate $d^k$ factors uniquely as a power of the inner derivation $\ad_c$
followed by at most one copy of $d$.
\end{thm}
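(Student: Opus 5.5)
The plan is to argue by induction on $m$, using only the defining relation $d^2=\ad_c$ from \eqref{eq:curved-again} and associativity of composition in $\operatorname{End}_k(A^\bullet)$. For $m=0$ both identities are trivial, since $d^0=\mathrm{id}=(\ad_c)^0$ and $d^1=(\ad_c)^0\circ d$. For the inductive step we assume $d^{2m}=(\ad_c)^m$. Then
\[
d^{2m+2}=d^{2m}\circ d^2=(\ad_c)^m\circ\ad_c=(\ad_c)^{m+1},
\]
and the odd case follows at once:
\[
d^{2m+1}=d^{2m}\circ d=(\ad_c)^m\circ d.
\]
The even identity is in fact nothing more than $d^{2m}=(d^2)^m$ together with the substitution $d^2=\ad_c$.

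I would also note that Lemma~\ref{lem:commute-powers} lets us write the odd iterate as $d\circ(\ad_c)^m$ as well, so the placement of the single factor of $d$ does not matter. This is the point where $d(c)=0$ enters, through Lemma~\ref{lem:super-commutation}; it is not needed for the displayed identities \eqref{eq:normal-form} themselves. In a remark I would record the sign check that $[d,\ad_c]=d\,\ad_c-(-1)^{1\cdot 2}\ad_c\,d$ is the ordinary commutator, because $|\ad_c|=2$ is even. This confirms that ``super-commute'' here means commute in the usual sense.

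For the phrase ``factors uniquely'', I would interpret uniqueness as uniqueness of the normal-form word: given $k$, write $k=2m+\varepsilon$ with $\varepsilon\in\{0,1\}$ by division with remainder, so the exponent $m$ and the presence or absence of $d$ are determined by $k$. I would state explicitly that this is uniqueness of the formal expression. It is not a claim that the operators $(\ad_c)^m$ and $(\ad_c)^{m'}\circ d$ are distinct as endomorphisms; for instance, both vanish when $c$ is central. The only real obstacle is making this interpretive point precise, and I would address it in one sentence.
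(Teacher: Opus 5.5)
Your proof is correct and uses the same induction on $m$ as the paper, with one meaningful difference in the odd step. The paper writes $d^{2m+1}=d\circ d^{2m}=d\circ(\ad_c)^m$ and then moves $d$ to the right via Lemma~\ref{lem:commute-powers}, which it justifies through Lemma~\ref{lem:super-commutation} and hence through $d(c)=0$. You write $d^{2m+1}=d^{2m}\circ d$ directly, so your only inputs are $d^2=\ad_c$ and associativity of composition. Your route is therefore more economical. It shows that the normal form holds for any operator $d$ with $d^2=\ad_c$, so the caveat in Remark~\ref{rem:hypotheses} (that the normal form ``may fail'' when $d(c)\neq 0$) is unfounded.

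One correction to your side remark: $d(c)=0$ is not needed for the alternative placement $d\circ(\ad_c)^m$ either, because $d\circ d^{2m}=d^{2m+1}=d^{2m}\circ d$ holds trivially. In fact $d^2=\ad_c$ alone gives $[d,\ad_c]=[d,d^2]=d^3-d^3=0$. Combined with the derivation identity $[d,\ad_x]=\ad_{d(x)}$, this forces $\ad_{d(c)}=0$, so $d(c)$ is automatically graded-central.

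Your reading of ``factors uniquely'' as uniqueness of the formal word, via division with remainder $k=2m+\varepsilon$, is the sensible one. The paper gives no separate argument for uniqueness.
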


\begin{proof}
We proceed by induction on $m\ge 0$. For $m=0$ the statement reads $d^0=\mathrm{id}$ and $d^1=d$, which is tautological.
For $m=1$ we have $d^2=\ad_c$ by \eqref{eq:curved-again} and $d^3=d\circ d^2=d\circ \ad_c=(\ad_c)\circ d$
by Lemma~\ref{lem:commute-powers}.

Assume the claim holds for a fixed $m\ge 1$. Then
\[
d^{2(m+1)}=d^2\circ d^{2m}=\ad_c\circ (\ad_c)^m=(\ad_c)^{m+1},
\]
and
\[
d^{2(m+1)+1}=d\circ d^{2(m+1)}=d\circ (\ad_c)^{m+1}=(\ad_c)^{m+1}\circ d,
\]
using Lemma~\ref{lem:commute-powers}. The induction closes.
\end{proof}

\begin{rmk}[No ad hoc binomial sums are needed]\label{rem:no-binomial}
The proof only uses the universal relations $d^2=\ad_c$ and $[d,\ad_c]=0$.
Thus, unlike heuristic expansions that distribute $d$ across products and insert formal powers of $c$,
the normal form \eqref{eq:normal-form} is canonical and independent of additional commutativity assumptions.
All graded signs are governed by the Koszul rule built into the commutator.
\end{rmk}

\subsection{Immediate consequences}
The normal form has several direct corollaries that will be used in later sections.

\begin{cor}[Even/odd dichotomy]\label{cor:even-odd}
For any homogeneous $a\in A^\bullet$ and $m\ge 0$ one has
\[
d^{2m}(a)=(\ad_c)^m(a),\qquad d^{2m+1}(a)=(\ad_c)^m\big(d(a)\big).
\]
In particular, $d^{2m}(1)=(\ad_c)^m(1)=0$ since $[c,1]=0$.
\end{cor}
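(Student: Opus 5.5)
The plan is to evaluate the operator identities of Theorem~\ref{thm:normal-form} on a homogeneous element $a\in A^\bullet$, and then handle the special value $a=1$ by a direct computation.

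\textbf{Step 1 (the two identities).} Since \eqref{eq:normal-form} is an equality of endomorphisms of $A^\bullet$, applying both sides to $a$ gives $d^{2m}(a)=(\ad_c)^m(a)$ and $d^{2m+1}(a)=\big((\ad_c)^m\circ d\big)(a)=(\ad_c)^m\big(d(a)\big)$. Homogeneity of $a$ is not used for this part. It only ensures that the graded commutators $[c,-]$ appearing in each iterate have well-defined Koszul signs. This is automatic here, because $|c|=2$ is even, so $[c,x]=cx-xc$ for every homogeneous $x$.

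\textbf{Step 2 (the unit).} The element $1$ is homogeneous of degree $0$, so
\[
[c,1]=c\cdot 1-(-1)^{2\cdot 0}\,1\cdot c=c-c=0 .
\]
Hence $\ad_c(1)=0$. For $m\ge1$, write $(\ad_c)^m(1)=(\ad_c)^{m-1}\big(\ad_c(1)\big)=(\ad_c)^{m-1}(0)=0$, using linearity of $\ad_c$. Combining this with Step~1 gives $d^{2m}(1)=0$.

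\textbf{Caveat at $m=0$.} The case $m=0$ needs separate treatment, since $d^{0}(1)=1\neq0$. The vanishing claim should therefore be read for $m\ge1$, with the convention $(\ad_c)^0=\mathrm{id}$. I would state this restriction explicitly in the proof.

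There is no genuine obstacle in this argument. The only points needing care are the sign check in $[c,1]$ and the $m=0$ edge case.
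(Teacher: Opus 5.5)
Your proof is correct and is essentially the argument the paper intends for this corollary, which it states without proof as an immediate consequence of Theorem~\ref{thm:normal-form}: you evaluate the operator identities at $a$, then use $[c,1]=c-c=0$ and linearity of $\ad_c$. Your caveat is also right: since $d^0(1)=1\neq 0$ in a nonzero algebra, the claim $d^{2m}(1)=0$ holds only for $m\ge 1$, so the paper's ``$m\ge 0$'' is an error in the statement and should be corrected.
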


\begin{cor}[Nilpotency transfer]\label{cor:nilpotency}
If $(\ad_c)^n=0$ degreewise for some $n\ge 1$, then $d^{2n}=0$ and $(A^\bullet,d)$ is a $2n$-complex.
\end{cor}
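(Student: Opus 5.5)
The plan is to read Corollary~\ref{cor:nilpotency} directly off the even case of the normal form in Theorem~\ref{thm:normal-form}; nothing beyond that theorem is needed. First I make the hypothesis precise. ``$(\ad_c)^n=0$ degreewise'' means that for every $i\in\mathbb{Z}$ the restriction
\[
(\ad_c)^n\big|_{A^i}\colon A^i\longrightarrow A^{i+2n}
\]
is the zero map. Since $A^\bullet=\bigoplus_i A^i$ and $(\ad_c)^n$ is $k$-linear, this is the same as $(\ad_c)^n=0$ as an endomorphism of $A^\bullet$.

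Next I apply Theorem~\ref{thm:normal-form} with $m=n$, which gives $d^{2n}=(\ad_c)^n$ as endomorphisms of $A^\bullet$. Equivalently, by Corollary~\ref{cor:even-odd}, for every homogeneous $a$ we have $d^{2n}(a)=(\ad_c)^n(a)=0$. Linearity then extends this to all of $A^\bullet$, so $d^{2n}=0$.

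Finally, I check the $N$-complex conclusion. By definition, an $N$-complex is a graded vector space with a degree-$1$ endomorphism whose $N$-th power vanishes. Here $|d|=1$ and $d^{2n}=0$, so $(A^\bullet,d)$ is a $2n$-complex.

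For completeness I would remark that $d^{k}=0$ for all $k\ge 2n$. Even exponents $k=2m$ with $m\ge n$ give $(\ad_c)^m=(\ad_c)^{m-n}(\ad_c)^n=0$. Odd exponents $k=2m+1$ with $m\ge n$ give $(\ad_c)^m d=0$ by the odd case of \eqref{eq:normal-form}.

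There is no real obstacle here. The only point needing care is that the hypothesis is on $(\ad_c)^n$ and not on $c^n$: the corollary uses exactly the quantity that appears in the normal form. Deriving it from the weaker assumption $c^n=0$ is the separate and harder issue that the paper treats later with the $(4n-2)$ bound.
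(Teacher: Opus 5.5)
Your proposal is correct and is exactly the paper's argument: the paper's proof is the single line applying Theorem~\ref{thm:normal-form} with $m=n$ to get $d^{2n}=(\ad_c)^n=0$. Your extra remarks are harmless. These are the degreewise-versus-global vanishing point and the observation that $d^k=0$ for all $k\ge 2n$. Note also that the even case you need, $d^{2n}=(d^2)^n=(\ad_c)^n$, already follows from $d^2=\ad_c$ and associativity of composition.
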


\begin{proof}
Apply Theorem~\ref{thm:normal-form} with $m=n$.
\end{proof}

The next criterion is convenient when $\ad_c$-nilpotency is verified via the curvature ideal.

\begin{prop}[Nilpotent curvature ideal implies $\ad_c$-nilpotency]\label{prop:nil-ideal}
Let $(c)$ denote the two-sided ideal of $A^\bullet$ generated by $c$. If $(c)^n=0$ for some $n$, then
$(\ad_c)^n=0$ and hence $d^{2n}=0$.
\end{prop}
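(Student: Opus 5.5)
We will show by induction on $m\ge 0$ that for every homogeneous $a\in A^\bullet$ the element $(\ad_c)^m(a)$ lies in the $m$-th power of the ideal, $(c)^m$. Here $(c)^0=A^\bullet$ by convention.

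The base case $m=0$ is trivial. For the inductive step, suppose $x=(\ad_c)^m(a)\in (c)^m$. By definition of the graded commutator,
\[
(\ad_c)^{m+1}(a)=[c,x]=cx-(-1)^{2|x|}xc=cx-xc .
\]
The sign is $+1$ because $|c|=2$ is even, so the commutator is an honest commutator. Since $c\in(c)$ and $x\in(c)^m$, the product $cx$ lies in $(c)\cdot(c)^m\subseteq (c)^{m+1}$. Likewise $xc\in(c)^m\cdot(c)\subseteq(c)^{m+1}$. Because $(c)^{m+1}$ is a two-sided ideal, it is closed under differences, so $[c,x]\in(c)^{m+1}$. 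For non-homogeneous $x$ we argue componentwise: $(c)^m$ is a graded ideal, as it is generated by the homogeneous element $c$, and $\ad_c$ is linear.

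With $m=n$ this gives $(\ad_c)^n(a)\in(c)^n=0$ for every homogeneous $a$. By linearity $(\ad_c)^n=0$ on all of $A^\bullet$, in particular degreewise. Corollary~\ref{cor:nilpotency}, equivalently Theorem~\ref{thm:normal-form} with $m=n$, then yields $d^{2n}=(\ad_c)^n=0$.

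The only step needing care is the inclusion $(c)^m\cdot(c)\subseteq(c)^{m+1}$ together with its left-hand analogue. Here $(c)^m$ means the product ideal, spanned by products of $m$ elements of $(c)$, so both inclusions are immediate from the definition. The evenness of $|c|$ removes any sign subtlety. We do not expect a genuine obstacle.
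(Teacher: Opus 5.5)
Your proof is correct and follows the same route as the paper: show $(\ad_c)^r(A^\bullet)\subseteq (c)^r$, then apply the normal form $d^{2n}=(\ad_c)^n$ via Corollary~\ref{cor:nilpotency}. Your induction step $\ad_c\big((c)^m\big)\subseteq (c)^{m+1}$, which uses $[c,x]=cx-xc$ because $|c|$ is even, spells out what the paper's one-line ``since $\ad_c(A^\bullet)\subset(c)$'' leaves implicit.
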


\begin{proof}
Since $\ad_c(A^\bullet)\subset (c)$, we have $\ad_c^r(A^\bullet)\subset (c)^r$ for all $r$. If $(c)^n=0$ then $\ad_c^n=0$.
Now invoke Corollary~\ref{cor:nilpotency}.
\end{proof}

\begin{rmk}[On the hypotheses]\label{rem:hypotheses}
The identity $[d,\ad_c]=0$ crucially uses $d(c)=0$. If $d(c)\neq 0$, the normal form \eqref{eq:normal-form} may fail.
In curved dg-algebras, however, $d(c)=0$ is part of the definition \eqref{eq:curved-again}, so the lemma and theorem apply.
\end{rmk}

\subsection{Minimal examples}\label{subsec:examples-iterates}
We record two toy examples illustrating the scope of Theorem~\ref{thm:normal-form}.

\begin{exmp}[Central curvature]\label{exa:central-curvature}
Suppose $c$ is central: $[c,a]=0$ for all $a\in A^\bullet$. Then $\ad_c=0$ and
Theorem~\ref{thm:normal-form} gives $d^{2}=0$ and $d^{2m}=0$ for all $m\ge 1$; thus $(A^\bullet,d)$ is an ordinary dg-algebra.
\end{exmp}

\begin{exmp}[Matrix algebra with inner differential]\label{exa:matrix}
Let $A^\bullet=\mathrm{Mat}_N(k)$ concentrated in degree $0$ and fix $C\in A^0$.
Define $d=\ad_C$ (degree $0\to 0$) and $c=C^2\in A^0$. Then $d^2=\ad_{C}^2=\ad_{C^2}=\ad_c$ and $d(c)=\ad_C(C^2)=[C,C^2]=0$,
so \eqref{eq:curved-again} holds. If $C$ is nilpotent of index $n$, then $(\ad_C)^n=0$ (Engel-type), hence $d^{2n}=0$ by
Corollary~\ref{cor:nilpotency}.
\end{exmp}

The normal form \eqref{eq:normal-form} reduces all higher iterates of $d$ to iterates of the inner derivation $\ad_c$,
followed by at most one copy of $d$. This representation will be the backbone for the $N$-complex bounds in §\ref{sec:ncomplex}
and for the curved-to-dg construction used in persistence in §\ref{sec:persistence}.

\section{Curvature Constraints and $N$-Complex Bounds}\label{sec:ncomplex}

Recall that an $N$-complex is a graded module $(C^\bullet,d)$ with $d^N=0$.
Using the normal form from Theorem~\ref{thm:normal-form}, we obtain clean
criteria ensuring that $(A^\bullet,d)$ is an $N$-complex.

\subsection{From curvature to nilpotency of $d$}

\begin{cor}[$\ad_c$-nilpotency implies an $N$-complex]\label{cor:n-from-ad}
If $(\ad_c)^n=0$ (degreewise) for some $n\ge 1$, then $d^{2n}=0$, hence $(A^\bullet,d)$
is a $2n$-complex.
\end{cor}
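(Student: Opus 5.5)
The plan is to read the statement off directly from the normal form of Theorem~\ref{thm:normal-form}, applied with $m=n$. This is the same argument already used for Corollary~\ref{cor:nilpotency}, now recorded in the language of $N$-complexes.

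First, I make the degreewise hypothesis precise. Saying $(\ad_c)^n=0$ degreewise means that for every $i$ the composite
\[
A^i\xrightarrow{\ad_c}A^{i+2}\xrightarrow{\ad_c}\cdots\xrightarrow{\ad_c}A^{i+2n}
\]
vanishes. Since $A^\bullet=\bigoplus_i A^i$ and $\ad_c$ is homogeneous of degree $2$, this is equivalent to $(\ad_c)^n=0$ as an endomorphism of $A^\bullet$.

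Second, I invoke equation \eqref{eq:normal-form} with $m=n$. It gives $d^{2n}=(\ad_c)^n$ as endomorphisms of $A^\bullet$, so $d^{2n}=0$. Degreewise, for homogeneous $a\in A^i$, Corollary~\ref{cor:even-odd} gives $d^{2n}(a)=(\ad_c)^n(a)=0$, and linearity extends this to all of $A^\bullet$.

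Third, I conclude with the definition recalled at the start of \S\ref{sec:ncomplex}. The graded module $(A^\bullet,d)$ is an $N$-complex exactly when $d^N=0$, so we have a $2n$-complex. As an optional remark, the odd iterate $d^{2n+1}=(\ad_c)^n\circ d$ also vanishes, but this adds nothing new.

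There is no real obstacle here. The only point needing care is that the normal form holds as an identity of endomorphisms of the whole graded space, with no extra hypotheses. That rests on $d(c)=0$ through Lemma~\ref{lem:super-commutation}, which is part of the defining relations \eqref{eq:curved-again}. Once this is noted, the proof is a one-line specialization.
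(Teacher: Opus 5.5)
Your proposal is correct and matches the paper's proof, which is the one-line specialization $d^{2n}=(\ad_c)^n=0$ of Theorem~\ref{thm:normal-form} with $m=n$. One small correction to your closing remark: this step does not rest on $d(c)=0$ or Lemma~\ref{lem:super-commutation}, because $d^{2n}=(d^2)^n=(\ad_c)^n$ follows from $d^2=\ad_c$ by associativity of composition alone.
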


\begin{proof}
By Theorem~\ref{thm:normal-form}, $d^{2n}=(\ad_c)^n=0$.
\end{proof}

The previous condition can be checked via natural algebraic hypotheses on $c$.

\begin{prop}[Nilpotent curvature ideal]\label{prop:nil-ideal-again}
Let $(c)$ be the two-sided ideal of $A^\bullet$ generated by $c$. If $(c)^n=0$ for some $n$,
then $(\ad_c)^n=0$ and, consequently, $(A^\bullet,d)$ is a $2n$-complex.
\end{prop}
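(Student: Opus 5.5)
The argument is the one already used for Proposition~\ref{prop:nil-ideal}; I would restate it with the inclusions made precise. Write $I=(c)$ for the two-sided ideal generated by $c$, and $I^r$ for its $r$-th power, i.e.\ the span of products $x_1x_2\cdots x_r$ with each $x_j\in I$. The plan is to prove by induction on $r\ge 1$ that
\[
(\ad_c)^r(A^\bullet)\subset I^r .
\]
Then $(c)^n=0$ forces $(\ad_c)^n=0$ degreewise. Theorem~\ref{thm:normal-form}, equivalently Corollary~\ref{cor:n-from-ad}, then gives $d^{2n}=(\ad_c)^n=0$, so $(A^\bullet,d)$ is a $2n$-complex.

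\emph{Base case.} For homogeneous $a$ we have $\ad_c(a)=ca-(-1)^{2|a|}ac=ca-ac$, since $|c|=2$ is even and no sign appears. Both $ca$ and $ac$ lie in $I$, so $\ad_c(A^\bullet)\subset I$.

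\emph{Inductive step.} This is the only point needing care. Suppose $(\ad_c)^r(a)=y\in I^r$. We must show $cy-yc\in I^{r+1}$, not merely that it lies in $I^r$. It suffices to treat a single product $y=x_1\cdots x_r$ with $x_j\in I$, and here the issue is that $cy$ is the product of $r+1$ elements of $I$ only if $c$ itself counts as an element of $I$. It does, since $c=1\cdot c\cdot 1\in I$ because $A^\bullet$ is unital. Hence $cy=c\,x_1\cdots x_r\in I^{r+1}$ and $yc=x_1\cdots x_r\,c\in I^{r+1}$. By linearity $\ad_c(I^r)\subset I^{r+1}$, which closes the induction.

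\emph{Conclusion.} Taking $r=n$ gives $(\ad_c)^n(A^\bullet)\subset (c)^n=0$. Applying Corollary~\ref{cor:n-from-ad} yields $d^{2n}=0$. No step is a real obstacle. The one point worth flagging is the inclusion $\ad_c(I^r)\subset I^{r+1}$, which uses unitality to place $c$ in $I$ and uses that the Koszul sign is trivial because $|c|$ is even. Everything else is immediate from the normal form.
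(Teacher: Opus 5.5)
Your proof is correct and is essentially the paper's argument: the paper also passes from $\ad_c(A^\bullet)\subset(c)$ to $(\ad_c)^r(A^\bullet)\subset(c)^r$ and then applies Corollary~\ref{cor:n-from-ad}. Your explicit inclusion $\ad_c(I^r)\subset I^{r+1}$ spells out the inductive step the paper leaves implicit. Also, $c\in(c)$ holds by definition of the generated ideal, so unitality is not needed there.
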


\begin{proof}
Since $\ad_c(A^\bullet)\subset (c)$, we have $\ad_c^r(A^\bullet)\subset (c)^r$ for all $r$.
Thus $(c)^n=0$ implies $\ad_c^n=0$. Apply Corollary~\ref{cor:n-from-ad}.
\end{proof}

\subsection{A sharp degreewise criterion via left/right multiplications}

Write $L_c$ and $R_c$ for left and right multiplication by $c$ on $A^\bullet$:
$L_c(a)=ca$ and $R_c(a)=ac$. Note that $L_c$ and $R_c$ commute and
$\ad_c=L_c-R_c$ as endomorphisms of the underlying graded vector space.

\begin{lem}[Binomial expansion for $\ad_c$]\label{lem:binomial-LR}
For every $r\ge 0$,
\[
(\ad_c)^r=(L_c-R_c)^r=\sum_{j=0}^{r}(-1)^j\binom{r}{j}\,L_c^{\,r-j}\,R_c^{\,j}.
\]
\end{lem}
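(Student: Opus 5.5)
The plan is to reduce the statement to the ordinary binomial theorem in a commutative subalgebra of $\mathrm{End}_k(A^\bullet)$. The only inputs are the identity $\ad_c=L_c-R_c$ and the commutation $L_cR_c=R_cL_c$. The paper asserts both just before the lemma, and I would verify them first, since the sign conventions are the only place where anything could go wrong.

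\emph{Step 1: $\ad_c=L_c-R_c$ with no hidden signs.} For homogeneous $a$, the graded commutator gives
\[
[c,a]=ca-(-1)^{|c||a|}ac .
\]
Since $|c|=2$, the sign is $(-1)^{2|a|}=1$, so $[c,a]=ca-ac=(L_c-R_c)(a)$. Both sides are $k$-linear, so the identity extends from homogeneous elements to all of $A^\bullet$. Both operators have degree $2$, which fits the grading.

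\emph{Step 2: $L_c$ and $R_c$ commute.} For any $a$, associativity of $m$ gives
\[
L_cR_c(a)=c(ac)=(ca)c=R_cL_c(a).
\]
Hence $L_c$ and $R_c$ generate a commutative $k$-subalgebra of $\mathrm{End}_k(A^\bullet)$. No Koszul signs arise here, because $L_c$ and $R_c$ are plain composition operators and not graded commutators.

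\emph{Step 3: expand.} In any commutative ring the binomial theorem holds, by the usual induction on $r$ using Pascal's rule $\binom{r}{j}+\binom{r}{j-1}=\binom{r+1}{j}$. Applying it to $X=L_c$ and $Y=-R_c$ gives
\[
(L_c-R_c)^r=\sum_{j=0}^{r}\binom{r}{j}L_c^{\,r-j}(-R_c)^{j}=\sum_{j=0}^{r}(-1)^j\binom{r}{j}L_c^{\,r-j}R_c^{\,j}.
\]
Combining this with Step 1 yields the stated formula for every $r\ge 0$; the case $r=0$ is the identity on both sides.

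I do not expect a real obstacle. The one point to check carefully is Step 1: one must confirm that the graded commutator contributes no sign. That relies on $c$ having even degree, so the argument would fail for an odd curvature element.
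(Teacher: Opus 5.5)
Your proposal is correct and follows the same route as the paper, which simply applies the binomial theorem to the commuting operators $L_c$ and $R_c$. Your explicit checks that $\ad_c=L_c-R_c$ (no Koszul sign, since $|c|=2$ is even) and that $L_cR_c=R_cL_c$ (by associativity) fill in details the paper only asserts.
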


\begin{proof}
Since $L_cR_c=R_cL_c$, the usual binomial theorem applies to the commuting operators $L_c$ and $R_c$.
\end{proof}

\begin{prop}[Left/right nilpotency $\Rightarrow$ explicit bound]\label{prop:LR-nilpotency}
Assume $L_c^n=0$ and $R_c^n=0$ for some $n\ge 1$. Then $(\ad_c)^{2n-1}=0$.
In particular, $d^{4n-2}=0$ and $(A^\bullet,d)$ is a $(4n-2)$-complex.
\end{prop}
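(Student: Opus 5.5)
The plan is to combine the binomial expansion of Lemma~\ref{lem:binomial-LR} with a pigeonhole argument on the exponents, and then pass to $d$ through the normal form of Theorem~\ref{thm:normal-form}.

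First I will check that $\ad_c=L_c-R_c$ holds with no stray signs. Since $|c|=2$ is even, the Koszul sign in $[c,a]=ca-(-1)^{2|a|}ac$ equals $+1$ for every homogeneous $a$. Hence $\ad_c(a)=ca-ac$, and extending linearly gives $\ad_c=L_c-R_c$ on all of $A^\bullet$. Associativity gives $L_cR_c(a)=c(ac)=(ca)c=R_cL_c(a)$, so the two operators commute. This is exactly what Lemma~\ref{lem:binomial-LR} needs.

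Next I will set $r=2n-1$ in Lemma~\ref{lem:binomial-LR}, which gives
\[
(\ad_c)^{2n-1}=\sum_{j=0}^{2n-1}(-1)^j\binom{2n-1}{j}L_c^{\,2n-1-j}R_c^{\,j}.
\]
Take any $j$ with $0\le j\le 2n-1$. If $j\ge n$, then $R_c^{\,j}=R_c^{\,j-n}R_c^{\,n}=0$. If $j\le n-1$, then $2n-1-j\ge n$, so $L_c^{\,2n-1-j}=0$. Because $L_c$ and $R_c$ commute, the ordering of the factors in each monomial does not matter. Therefore every summand vanishes and $(\ad_c)^{2n-1}=0$.

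Finally, Theorem~\ref{thm:normal-form} with $m=2n-1$ gives $d^{4n-2}=d^{2(2n-1)}=(\ad_c)^{2n-1}=0$, so $(A^\bullet,d)$ is a $(4n-2)$-complex. Equivalently, one can apply Corollary~\ref{cor:n-from-ad} with $2n-1$ in place of $n$.

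The only step needing any care is the sign check above: the argument rests on the Koszul sign in $\ad_c$ being trivial and on $L_c$ and $R_c$ commuting. The degreewise reading of the hypothesis is harmless. The operators $L_c$ and $R_c$ shift degree by $2$ uniformly, so $L_c^n=0$ and $R_c^n=0$ as endomorphisms of $A^\bullet$ is the same as the vanishing of each component $A^i\to A^{i+2n}$, and the whole argument runs degree by degree.
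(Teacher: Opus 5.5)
Your proposal is correct and follows the paper's own proof. Like the paper, you expand $(L_c-R_c)^{2n-1}$ binomially, note that each monomial has either $j\ge n$ or $2n-1-j\ge n$, and then apply the normal form $d^{2m}=(\ad_c)^m$ with $m=2n-1$. Your sign check (evenness of $|c|$ gives $\ad_c=L_c-R_c$) and the associativity argument for $L_cR_c=R_cL_c$ make explicit what the paper states without proof just before Lemma~\ref{lem:binomial-LR}.
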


\begin{proof}
By Lemma~\ref{lem:binomial-LR}, any summand of $(\ad_c)^r$ has the form $L_c^{\,r-j}R_c^{\,j}$.
If $r\ge 2n-1$, then either $j\ge n$ or $r-j\ge n$, so $L_c^{\,r-j}R_c^{\,j}=0$.
Hence $(\ad_c)^{2n-1}=0$. The claim for $d$ follows from Theorem~\ref{thm:normal-form}.
\end{proof}

\begin{rmk}[Hierarchy of hypotheses]\label{rem:hierarchy}
The implication $(c)^n=0\Rightarrow L_c^n=R_c^n=0$ holds, hence
Proposition~\ref{prop:nil-ideal-again} is a direct consequence of
Proposition~\ref{prop:LR-nilpotency}, which also yields the explicit $(4n-2)$ bound for $d$.
By contrast, $c^n=0$ alone does \emph{not} force $(\ad_c)^n=0$ nor any $2n$-bound for $d$
(see Example~\ref{ex:counterexample}).
\end{rmk}

\begin{rmk}[On $N$-complexes: historical note]
The notion of $N$-complex goes back at least to Mayer's alternative homology \cite{Mayer1942}
and was revitalized in Kapranov's $q$-analog of homological algebra \cite{Kapranov91}, with broader surveys in 
Dubois-Violette \cite{Dubois-Violette2001}. Our bounds exploit the normal form $d^{2m}=(\ad_c)^m$ and degreewise operator 
controls on left/right multiplications by $c$.
\end{rmk}

\subsection{The Optimal $N$-Complex Bound and its Obstruction}
We now address the precise relationship between the nilpotency of the curvature element $c$ and the induced $N$-complex structure. A naive assumption might be that $c^n=0$ implies $d^{2n}=0$. As we will show, this is false. However, combining our normal form (Theorem 3) with the binomial expansion of $ad_c$ (Lemma 7)  yields a spectacular and tight universal bound.

\begin{thm}[Optimal Curvature-Induced Complex]\label{thm:newX}
Let $(A^\bullet, m, d, c)$ be a curved dg-algebra. If $c^n=0$ for some $n \ge 1$, then $(A^\bullet, d)$ is a $(4n-2)$-complex (i.e., $d^{4n-2}=0$). Furthermore, this bound sharp for $n=2$ (free example), while optimality for general $n$ remains open.
\end{thm}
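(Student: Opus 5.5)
The plan is to reduce the statement to Proposition~\ref{prop:LR-nilpotency}. The hypothesis $c^n=0$ gives $L_c^n=0$ and $R_c^n=0$ directly. Indeed, left multiplication composes as $L_c^r(a)=c^r a$ by associativity, and similarly $R_c^r(a)=a c^r$. So $c^n=0$ forces $L_c^n=L_{c^n}=0$ and $R_c^n=R_{c^n}=0$ on every graded piece $A^i$. No sign issues arise, because $L_c$ and $R_c$ are plain (ungraded-sign) multiplications, and $\ad_c=L_c-R_c$ holds since $|c|=2$ is even: $[c,a]=ca-(-1)^{2|a|}ac=ca-ac$. I will state this even-degree observation explicitly, because it is what makes Lemma~\ref{lem:binomial-LR} legitimate. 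Commutativity of $L_c$ and $R_c$ is associativity, $(ca)c=c(ac)$.

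Next I apply Lemma~\ref{lem:binomial-LR} with $r=2n-1$. In each term $L_c^{\,r-j}R_c^{\,j}$, either $j\ge n$ or $r-j\ge n$, so every summand vanishes and $(\ad_c)^{2n-1}=0$. Theorem~\ref{thm:normal-form} with $m=2n-1$ then gives $d^{4n-2}=(\ad_c)^{2n-1}=0$, which is the first claim. This part is routine.

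For the sharpness clause at $n=2$, the bound is $d^6=0$, and I must show $d^4\neq 0$ in some example with $c^2=0$, i.e.\ $(\ad_c)^2\neq 0$. By Lemma~\ref{lem:binomial-LR}, $(\ad_c)^2(a)=c^2a-2cac+ac^2=-2cac$. So I need a curved dg-algebra with $c^2=0$ but $cac\neq 0$ for some $a$; since $\mathrm{char}\,k=0$, the factor $-2$ is harmless. I will build it freely. Take the graded algebra generated by $x$ of degree $0$ (or any degree) and $c$ of degree $2$, together with auxiliary generators $y=d(x)$ and $z=d(y)$, subject only to $c^2=0$ and $z=[c,x]$. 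Then I define $d$ as the unique derivation with $d(x)=y$, $d(y)=[c,x]$ and $d(c)=0$.

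The main obstacle is verifying consistency and nondegeneracy in this example. First, $d$ must be compatible with the relations: $d(c^2)=d(c)c+c\,d(c)=0$, so the ideal generated by $c^2$ is $d$-stable. Second, $d^2=\ad_c$ must hold on all of $A$. Since both sides are derivations (using $[d,d]=2d^2$ is a derivation), it suffices to check this on generators, where it holds by construction: $d^2(x)=[c,x]$, $d^2(y)=d([c,x])=[c,y]$ and $d^2(c)=0=[c,c]$. Third, and most delicate, I must check $cxc\neq 0$ in $k\langle x,y,c\rangle/(c^2)$. This follows from a monomial (Gröbner/diamond lemma) basis: the words avoiding the subword $cc$ form a basis, and $cxc$ is such a word.

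If time permits, I will note that the same free construction gives $(\ad_c)^{2n-2}(x)$ with a nonzero coefficient on $c^{n-1}xc^{n-1}$, namely $\pm\binom{2n-2}{n-1}$. This would suggest sharpness for all $n$, but I will keep the claim at $n=2$ as stated.
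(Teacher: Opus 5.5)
Your proof of $d^{4n-2}=0$ is the paper's argument: $c^n=0$ gives $L_c^n=R_c^n=0$, the binomial expansion of $\ad_c=L_c-R_c$ kills $(\ad_c)^{2n-1}$, and the normal form gives $d^{4n-2}=(\ad_c)^{2n-1}=0$. Your explicit remark that $\ad_c=L_c-R_c$ relies on $|c|$ being even is a useful addition that the paper leaves implicit.

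The gap is in the sharpness clause. You reduce it to showing $d^4\neq 0$, but that only rules out exponents $N\le 4$. Since $d^N=0$ implies $d^{N'}=0$ for all $N'\ge N$, sharpness of $d^6=0$ means $d^5\neq 0$. Equivalently, $(\ad_c)^2\circ d\neq 0$, i.e.\ $c\,d(a)\,c\neq 0$ for some $a$. Your computation $(\ad_c)^2(x)=-2cxc$ does not address this.

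The repair is one line in your own example: $d^5(x)=(\ad_c)^2(y)=-2\,cyc$, and $cyc$ is a word avoiding $cc$, hence nonzero in $k\langle x,y,c\rangle/(c^2)$. The paper's Example has the same omission: it only computes $d^4(y)$. In $\widetilde A\cong k\langle\theta,y\rangle/(\theta^4)$ one does have $d^5(y)=-2(\theta^3y\theta^2-\theta^2y\theta^3)\neq 0$, but the paper never checks this.

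The same correction applies to your closing remark, since $(\ad_c)^{2n-2}(x)\neq 0$ only gives $d^{4n-4}\neq 0$. Work in $k\langle x,y,c\rangle/(c^n)$, where $(c^n)$ is $d$-stable and the generator check is unchanged, and apply $(\ad_c)^{2n-2}$ to $y$ instead. This gives $d^{4n-3}(x)=(\ad_c)^{2n-2}(y)=(-1)^{n-1}\binom{2n-2}{n-1}c^{n-1}yc^{n-1}\neq 0$. So, once corrected, your free construction proves sharpness for every $n$, which is more than the paper claims.

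Apart from this, your example is a sound and more carefully verified alternative to the paper's. The paper adjoins an odd $\theta$ with $\theta^2=c$, sets $d=\ad_\theta$, and never checks that $xyx$ survives in $\widetilde A$. Choosing $|x|\ge 1$ in your construction would also make every graded piece finite-dimensional, as the standing convention requires.
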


\begin{proof}
Since $c^n=0$ in the algebra, the left and right multiplication operators satisfy $L_c^n = 0$ and $R_c^n = 0$ as endomorphisms on $A^\bullet$. By Lemma 7, $(ad_c)^{2n-1} = \sum_{j=0}^{2n-1} (-1)^j \binom{2n-1}{j} L_c^{2n-1-j} R_c^j$. For every term in this sum, either $j \ge n$ or $(2n-1-j) \ge n$. Thus, every summand contains either $R_c^n$ or $L_c^n$, forcing $(ad_c)^{2n-1} = 0$. Applying the normal form from Theorem 3 ($d^{2m} = (ad_c)^m$)  with $m = 2n-1$, we immediately obtain $d^{4n-2} = 0$.
\end{proof}

\subsection{A counterexample: $c^n=0$ does not control $(\ad_c)^n$}\label{subsec:counterexample}

We show that the nilpotency of $c$ alone is insufficient to infer a nilpotency bound for $d$.
Let $k$ be a field of characteristic $0$ and consider the associative algebra
\[
A:=k\langle x,y\rangle/(x^2),
\]
graded by $|x|=2$, $|y|=0$. Set $c:=x \in A^2$, so $c^2=x^2=0$.

\begin{lem}\label{lem:ad2-nonzero}
In $A$ one has $(\ad_c)^2(y)=-2\,x y x\neq 0$.
\end{lem}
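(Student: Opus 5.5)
The plan is a direct computation of $(\ad_c)^2(y)$ from the definition of the graded commutator in \S\ref{sec:conventions}, followed by a check that the resulting element is nonzero in $A=k\langle x,y\rangle/(x^2)$. No earlier structural result is needed beyond the sign conventions.

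The first step is to compute $\ad_c(y)=[x,y]$. Since $|x|=2$ and $|y|=0$, the Koszul sign is $(-1)^{|x||y|}=+1$, so $\ad_c(y)=xy-yx$. This element is homogeneous of degree $2$.

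The second step is to apply $\ad_c$ once more. The sign is now $(-1)^{2\cdot 2}=+1$, so
\[
(\ad_c)^2(y)=x(xy-yx)-(xy-yx)x=x^2y-xyx-xyx+yx^2 .
\]
Using $x^2=0$ in $A$, this reduces to $-2\,xyx$.

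The last step, and the only one with any content, is showing that $xyx\neq 0$ in $A$. I would use that $(x^2)$ is a two-sided ideal generated by a monomial. Hence the monomials of $k\langle x,y\rangle$ containing $xx$ as a subword span the ideal $(x^2)$. It follows that the images of the monomials \emph{not} containing $xx$ form a $k$-basis of $A$; this is the trivial Gröbner/diamond-lemma statement for a monomial relation, since there are no overlaps to resolve. The word $xyx$ contains no $xx$, so it is a basis element and in particular nonzero. Because $\operatorname{char}k=0$, the scalar $-2$ is invertible, so $-2\,xyx\neq 0$. I expect this basis argument to be the only point needing care; everything else is a sign check.
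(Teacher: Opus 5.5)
Your proposal is correct and follows the paper's proof, using the same expansion $x(xy-yx)-(xy-yx)x=x^2y-2xyx+yx^2$ and then killing $x^2$. The only difference is that the paper simply asserts $xyx\neq 0$, whereas you justify it: the words avoiding $xx$ form a basis of $k\langle x,y\rangle/(x^2)$, and $-2$ is invertible, which fills the one step the paper leaves implicit.
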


\begin{proof}
Compute $\ad_c(y)=xy-yx$. Then
\[
(\ad_c)^2(y)=\ad_c(xy-yx)=x(xy-yx)-(xy-yx)x
= xxy-xyx-xyx+yxx.
\]
Using $x^2=0$ the first and last terms vanish, giving $(\ad_c)^2(y)=-2xyx\neq 0$ in $A$.
\end{proof}

To realize a curved differential with $d^2=\ad_c$ and $d(c)=0$, we adjoin an odd square root
of $c$ as follows. Let $\widetilde A$ be the $\mathbb{Z}$-graded algebra obtained by adjoining
a homogeneous element $\theta$ of degree $1$ subject to the single relation $\theta^2=c$
(and extending the product by the graded Leibniz rule). Define
\[
d:=\ad_\theta \quad\text{on }\widetilde A.
\]
Then $d$ is a degree-$1$ derivation with
\[
d^2=\ad_{\theta}^2=\ad_{\theta^2}=\ad_c,\qquad d(c)=[\theta,\theta^2]=0,
\]
so $(\widetilde A,m,d,c)$ is a curved dg-algebra in the sense of \eqref{eq:curved-again}.

We now demonstrate that the bound $4n-2$ is strictly optimal, and that the mere nilpotency $c^n=0$ cannot force $(ad_c)^n=0$ nor $d^{2n}=0$.

\begin{exmp}[Failure of the $2n$-bound and optimality of $4n-2$]\label{ex:counterexample}
Let $k$ be a field of characteristic 0 and consider the associative algebra $A := k\langle x, y \rangle / (x^2)$ graded by $|x|=2, |y|=0$. Set $c := x \in A^2$, so $c^2=0$ (here $n=2$).

We compute the action of $(ad_c)^2$ on $y$:
\[(ad_c)^2(y) = ad_c(xy - yx) = x(xy - yx) - (xy - yx)x = x^2y - 2xyx + yx^2\]

Using $x^2=0$, the first and last terms vanish, leaving $(ad_c)^2(y) = -2xyx \neq 0$. To realize a curved differential, we adjoin an odd element $\theta$ of degree 1 subject to $\theta^2=c$, and define $d := ad_\theta$. Then $d^2 = ad_\theta^2 = ad_c$ and $d(c)=0$, satisfying the CDA axioms.

For $n=2$, the naive bound would predict $d^{2n} = d^4 = 0$. However, by Theorem 3: $d^4(y) = (ad_c)^2(y) = -2xyx \neq 0$.

This proves that $c^n=0$ does not imply $d^{2n}=0$. The true vanishing occurs exactly at the optimal bound provided by Theorem \ref{thm:newX}: for $n=2$, the complex is a $(4(2)-2) = 6$-complex. Indeed, $(ad_c)^3 = L_c^3 - 3L_c^2 R_c + 3L_c R_c^2 - R_c^3 = 0$ since $c^2=0$, meaning $d^6=0$.
\end{exmp}

\begin{rmk}[On the $(4n-2)$ bound]\label{rem:4n-2}
Under the degreewise left/right nilpotency $L_c^n=R_c^n=0$, Proposition~\ref{prop:LR-nilpotency}
yields $(\ad_c)^{2n-1}=0$ and therefore $d^{4n-2}=0$. Example~\ref{ex:counterexample} shows
that one cannot deduce \emph{any} $2n$-bound from $c^n=0$ alone.
\end{rmk}

The bounds above will be used in two ways. First, Corollary~\ref{cor:n-from-ad} reduces the
problem of proving $d^N=0$ to checking $(\ad_c)^{N/2}=0$. Second, Proposition~\ref{prop:LR-nilpotency}
provides a practical criterion---verifiable degreewise on left/right multiplications---that implies
the $(4n-2)$ bound for $d$. These will feed into the constructions of §\ref{sec:persistence}.

\section{A Minimal Persistence Application}\label{sec:persistence}

This section provides a reproducible and fully rigorous way to interface curvature with persistence.
Instead of forcing a square-zero property by \emph{curving} the differential, we keep a genuine
chain complex with $\partial^2=0$ and let the curvature \emph{modify the filtration}.
This places us under the umbrella of the standard stability theorems for persistence
modules over a field, via interleavings or bottleneck distance
\cite{CohenSteinerEdelsbrunnerHarer2007,ChazalDeSilvaGlisseOudot2016}.

\subsection{Curvature-controlled filtrations}\label{subsec:curv-filtration}
Let $K$ be a finite simplicial complex and $(C_\bullet(K;k),\partial)$ its chain complex with coefficients
in the base field $k$ from \S\ref{sec:conventions}. Fix a norm $\|\cdot\|$ on $A^2$ and a family of
filtration functions
\[
\phi: A^2 \longrightarrow \{\text{functions } \phi_c:\ \mathrm{Simp}(K)\to\mathbb{R}\},\qquad
c\longmapsto \phi_c,
\]
such that there exists a constant $L\ge 0$ with
\begin{equation}\label{eq:Lip}
\|\phi_c-\phi_{c'}\|_\infty := \sup_{\sigma\in \mathrm{Simp}(K)} |\phi_c(\sigma)-\phi_{c'}(\sigma)|
\ \le\ L\,\|c-c'\|\qquad \text{for all } c,c'\in A^2.
\end{equation}
We get a $1$-parameter filtration by declaring that a simplex $\sigma$ enters at time
$t=\phi_c(\sigma)$; we denote the resulting filtered chain complex by $(C_\bullet,F^{\,c}_\bullet)$.

\subsection{Stability theorem}\label{subsec:stability}
Let $H_\ast(C_\bullet,F^{\,c}_\bullet)$ be the persistence module obtained by applying homology degreewise
to $(C_\bullet,F^{\,c}_\bullet)$.

\begin{thm}[Stability under curvature-driven filtration changes]
\label{thm:filtration-stability}
Let $c,c'\in A^2$ and set $\Delta:=\|\phi_c-\phi_{c'}\|_\infty$.
Then the interleaving distance (equivalently, the bottleneck distance of the barcodes) between
$H_\ast(C_\bullet,F^{\,c}_\bullet)$ and $H_\ast(C_\bullet,F^{\,c'}_\bullet)$ is at most $\Delta$.
In particular, if \eqref{eq:Lip} holds, then
\[
d_I\!\Big(H_\ast(C_\bullet,F^{\,c}_\bullet),\,H_\ast(C_\bullet,F^{\,c'}_\bullet)\Big)\ \le\ L\,\|c-c'\|.
\]
\end{thm}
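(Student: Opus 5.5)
The plan is to reduce the statement to the classical stability theorem for sublevel-set filtrations of a fixed finite complex, proving the required interleaving by hand so the only input from persistence theory is the isometry theorem. One point needs care first: for $(C_\bullet,F^{\,c}_\bullet)$ to be a filtered chain complex, $\phi_c$ must be monotone, $\phi_c(\tau)\le\phi_c(\sigma)$ whenever $\tau$ is a face of $\sigma$. Otherwise $F^{\,c}_t$ is not closed under $\partial$. I will assume this, as implicit in ``a simplex enters at time $\phi_c(\sigma)$''. If it fails, I would replace $\phi_c$ by its monotone hull $\hat\phi_c(\sigma):=\max_{\tau\subseteq\sigma}\phi_c(\tau)$. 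Since a max of functions each perturbed by at most $\Delta$ is itself perturbed by at most $\Delta$, we still have $\|\hat\phi_c-\hat\phi_{c'}\|_\infty\le\Delta$.

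\textbf{Step 1 (inclusions).} For $t\in\mathbb{R}$ let $F^{\,c}_t\subseteq C_\bullet(K;k)$ be the subcomplex spanned by the simplices $\sigma$ with $\phi_c(\sigma)\le t$. If $\phi_c(\sigma)\le t$, then $\phi_{c'}(\sigma)\le t+\Delta$. Hence $F^{\,c}_t\subseteq F^{\,c'}_{t+\Delta}$, and symmetrically $F^{\,c'}_t\subseteq F^{\,c}_{t+\Delta}$. These are inclusions of subcomplexes of the single square-zero complex $(C_\bullet,\partial)$, so they are chain maps.

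\textbf{Step 2 (interleaving).} Applying $H_\ast$ gives morphisms $\alpha_t:H_\ast(F^{\,c}_t)\to H_\ast(F^{\,c'}_{t+\Delta})$ and $\beta_t:H_\ast(F^{\,c'}_t)\to H_\ast(F^{\,c}_{t+\Delta})$. They commute with the structure maps, because all maps are induced by inclusions inside $C_\bullet$. For the same reason $\beta_{t+\Delta}\circ\alpha_t$ is induced by the inclusion $F^{\,c}_t\subseteq F^{\,c}_{t+2\Delta}$, which is exactly the internal structure map. The same holds for $\alpha_{t+\Delta}\circ\beta_t$. This is a $\Delta$-interleaving, so $d_I\le\Delta$.

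\textbf{Step 3 (barcodes).} $K$ is finite and $k$ is a field, so each persistence module is pointwise finite-dimensional with finitely many critical values. By the isometry theorem \cite{ChazalDeSilvaGlisseOudot2016} (cf.\ \cite{CohenSteinerEdelsbrunnerHarer2007}), the bottleneck distance equals the interleaving distance, so it is also at most $\Delta$. Finally, \eqref{eq:Lip} gives $\Delta\le L\|c-c'\|$, which yields the displayed inequality.

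The main obstacle is not the interleaving, which is formal. It is the well-definedness issue behind Step 1, namely monotonicity of $\phi_c$, and, for the ``equivalently'' clause, checking the tameness hypotheses of the isometry theorem. Finiteness of $K$ disposes of the latter. For the former I would state monotonicity explicitly as a standing hypothesis and mention the monotone-hull fallback.
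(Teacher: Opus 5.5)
Your proof is correct and is essentially the paper's argument written out in full: the uniform shift gives inclusions $F^{\,c}_t\subseteq F^{\,c'}_{t+\Delta}$ and $F^{\,c'}_t\subseteq F^{\,c}_{t+\Delta}$, hence a $\Delta$-interleaving; the isometry (stability) theorem turns this into the bottleneck bound, and \eqref{eq:Lip} gives the Lipschitz form. Your monotonicity caveat is well founded and is left implicit in the paper; it actually fails in the toy example of \S\ref{subsec:toy-rev} whenever $\ell(c)<-1$, since then $\tau$ would enter before its edges.
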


\begin{proof}[Proof sketch]
A uniform $\infty$-bound on the filtration shift produces an interleaving of the same size,
hence a bottleneck bound $\le\Delta$; see the classical persistence stability theorem for
filtered chain complexes over a field
\cite{CohenSteinerEdelsbrunnerHarer2007,ChazalDeSilvaGlisseOudot2016}.
The Lipschitz conclusion follows immediately from \eqref{eq:Lip}.
\end{proof}

\begin{cor}[Bar-length robustness]\label{cor:bar-robustness}
Let $I=[b,d)$ be a bar in the barcode of $H_\ast(C_\bullet,F^{\,c}_\bullet)$.
If $L\,\|c-c'\|<\tfrac{1}{2}(d-b)$, then there exists a matching bar
$I'=[b',d')$ in the barcode of $H_\ast(C_\bullet,F^{\,c'}_\bullet)$
with $|b-b'|,|d-d'|\le L\,\|c-c'\|$.
\end{cor}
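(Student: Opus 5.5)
The plan is to derive the corollary directly from Theorem~\ref{thm:filtration-stability} via the bottleneck form of the stability statement. Set $\varepsilon:=L\|c-c'\|$. By Theorem~\ref{thm:filtration-stability}, for each homological degree the interleaving distance between $H_\ast(C_\bullet,F^{\,c}_\bullet)$ and $H_\ast(C_\bullet,F^{\,c'}_\bullet)$ is at most $\varepsilon$. Since $K$ is finite and $k$ is a field, both persistence modules are pointwise finite-dimensional and decompose into interval modules. The isometry theorem (interleaving distance equals bottleneck distance) then gives a bottleneck distance of at most $\varepsilon$.

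I will use one slightly stronger form. For finite barcodes the infimum in the bottleneck distance is attained, so there is an actual $\varepsilon$-matching $\mu$ between the two barcodes. If one prefers to avoid attainment, I will instead take a $\delta$-matching for every $\delta>\varepsilon$ and pass to a limit. This works because there are only finitely many bars, hence finitely many possible matchings, so some single matching works for a sequence $\delta\to\varepsilon$ and is therefore an $\varepsilon$-matching. By definition of an $\varepsilon$-matching, every bar of length greater than $2\varepsilon$ is matched. Matched bars $[b,d)$ and $[b',d')$ satisfy $|b-b'|\le\varepsilon$ and $|d-d'|\le\varepsilon$, while unmatched bars have length at most $2\varepsilon$.

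Now take the bar $I=[b,d)$. The hypothesis $\varepsilon<\tfrac12(d-b)$ says $d-b>2\varepsilon$, so $I$ cannot be left unmatched. Its partner $I'=[b',d')$ satisfies the endpoint bounds, which is the claim. The one point to handle is the convention for infinite bars ($d=\infty$). There the hypothesis holds trivially, and the matching pairs essential classes with essential classes of the same degree, with $|b-b'|\le\varepsilon$ and $d'=\infty$. I read $|d-d'|$ as $0$ in that case.

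The main obstacle is not computational. It is making sure the cited stability statement really provides a matching with the strict threshold. The usual convention is that bars of length $>2\varepsilon$ must be matched, and equality could fail without attainment; the finiteness argument above settles this. I also need interleaving distance to equal bottleneck distance (the isometry theorem of \cite{ChazalDeSilvaGlisseOudot2016}) rather than only an inequality in the unhelpful direction.
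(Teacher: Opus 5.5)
Your proposal is correct and takes the same route the paper intends: the paper states the corollary without proof, directly after Theorem~\ref{thm:filtration-stability}, as an immediate reading of the bottleneck bound, and a bar of length exceeding $2L\|c-c'\|$ cannot be left unmatched in a matching of that cost. Your extra care about attaining the bottleneck infimum for finite barcodes, and about essential bars, fills in details the paper leaves implicit.
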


\subsection{Toy example: a flag complex on four vertices}\label{subsec:toy-rev}
Let $K$ be the flag complex on the graph with vertices $V=\{1,2,3,4\}$ and edges
$E=\{12,23,34,41,13\}$; the unique $2$-simplex is $\tau=[1,2,3]$.
Define $\phi_c$ by
\[
\phi_c(\sigma) = \begin{cases}
0,& \sigma\in V,\\
1,& \sigma\in E,\\
2+\ell(c),& \sigma=\tau,
\end{cases}
\]
where $\ell:A^2\to\mathbb{R}$ is a fixed linear functional (choose $\|\ell\|=1$ with respect to the dual norm).
Then $\|\phi_c-\phi_{c'}\|_\infty = |\ell(c)-\ell(c')|\le \|c-c'\|$, so $L=1$ in \eqref{eq:Lip}.
In degree $1$, there is a single prominent bar born at time $1$ and dying when $\tau$ enters,
namely at $t=2+\ell(c)$. By Theorem~\ref{thm:filtration-stability} and
Corollary~\ref{cor:bar-robustness}, changing $c\mapsto c'$ perturbs this bar by at most
$|\ell(c)-\ell(c')|$ in both birth/death times, hence in its length.

\paragraph{Implementation note.}
For numerical experiments, one may pick any norm on $A^2$ (e.g.\ an operator norm induced
degreewise) and fix a small family of linear functionals $\ell_j$ to define several $2$-simplices'
entry times. The constant $L$ is then $\max_j\|\ell_j\|$, and the same stability bound applies
verbatim \cite{CohenSteinerEdelsbrunnerHarer2007,ChazalDeSilvaGlisseOudot2016}.

\section{Conclusions}\label{sec:conclusions}

We established a canonical normal form for the iterates of a curved differential:
for any curved dg-algebra $(A^\bullet,m,d,c)$ with $d^2=\ad_c$ and $d(c)=0$,
Theorem~\ref{thm:normal-form} shows that $d^{2m}=(\ad_c)^m$ and $d^{2m+1}=(\ad_c)^m d$.
This simple but robust factorization reduces all higher powers of $d$ to powers of
the inner derivation $\ad_c$ and makes the sign bookkeeping transparent.
As immediate consequences, nilpotency of $\ad_c$ yields nilpotency of $d$
(Corollary~\ref{cor:n-from-ad}), while structural hypotheses such as the
nilpotency of the curvature ideal $(c)$ or of the left/right multiplications by $c$
provide explicit $N$-complex bounds (Propositions~\ref{prop:nil-ideal-again} and \ref{prop:LR-nilpotency}).
The counterexample in §\ref{subsec:counterexample} illustrates that the mere nilpotency of $c$
does not control $d$ without additional assumptions, delineating a sharp frontier for feasible bounds.

On the applied side, instead of forcing a curved-to-dg extension, we modeled curvature
as a \emph{filtration controller}: a square–zero chain complex endowed with a
curvature–dependent filtration. This design places us under the umbrella of the
standard persistence stability framework and yields a clean Lipschitz control of
barcodes with respect to degreewise curvature variation
(Theorem~\ref{thm:filtration-stability} and Corollary~\ref{cor:bar-robustness}).
A fully reproducible toy example in §\ref{subsec:toy-rev} illustrates the mechanism
on a small flag complex.

Taken together, these contributions clarify the algebraic mechanics of curved differentials,
pin down verifiable hypotheses guaranteeing $N$-complex structure, and supply a minimal
and principled pipeline to interface curvature with persistent homology.
We expect the normal form, the hierarchy of curvature constraints, and the curved-to-dg extension
to be useful beyond the examples shown here, notably in controlled deformations and quantitative
homological algebra.

\section{Open Problems and Future Directions}\label{sec:open}

We conclude with a concise list of questions that naturally extend the present work.

\begin{enumerate}[\textbf{OP\arabic*}.]

\item \textbf{Optimal $N$-bounds under weak hypotheses.}
We proved that $L_c^n=R_c^n=0$ implies $(\ad_c)^{2n-1}=0$ and hence $d^{4n-2}=0$
(Proposition~\ref{prop:LR-nilpotency}), while $(c)^n=0$ yields $d^{2n}=0$
(Proposition~\ref{prop:nil-ideal-again}). 
\emph{Question:} determine optimal (smallest possible) exponents for $d$ under
intermediate assumptions between $c^n=0$ and $(c)^n=0$.
In particular, characterize when $c^n=0$ forces $(\ad_c)^{n'}=0$ degreewise for some $n'$,
and identify algebraic obstructions.

\item \textbf{Degreewise spectral data for $\ad_c$.}
The normal form reduces $d^{2m}$ to $(\ad_c)^m$.
\emph{Question:} develop a degreewise functional calculus for $\ad_c$ (e.g.\ Jordan/\!Engel-type
decompositions by degree) that yields refined $N$-bounds or filtration-sensitive estimates,
possibly depending on the graded support of $c$.

\item \textbf{Stability constants in persistence.}
Theorem~\ref{thm:filtration-stability} yields a Lipschitz control where the constant is the $\ell^\infty$ shift
of the filtration (or an a priori Lipschitz bound on the curvature-to-filtration map).
\emph{Question:} derive sharp, computable expressions for these constants in common filtration families
(e.g.\ sublevel, flag/Rips, degree truncations), and study tightness on explicit parametric examples,
in the spirit of classical stability results \cite{CohenSteinerEdelsbrunnerHarer2007,ChazalDeSilvaGlisseOudot2016}. 

\item \textbf{Homological invariants beyond barcodes.}
\emph{Question:} identify invariants of $(A^\bullet,d,c)$ that are preserved or vary
Lipschitz-continuously under bounded $\ad_c$-perturbations, such as torsion exponents,
quantitative Dehn functions, or stability of minimal models, and compare their sensitivity
to that of persistence barcodes.

\item \textbf{Controlled deformations and MC solutions.}
The Maurer–Cartan viewpoint suggests varying $d$ within the curved constraint.
\emph{Question:} study solution sets of $d_\varepsilon=d+\varepsilon\,\ad_c$ at small $\varepsilon$
and their moduli, seeking KAM-type or Lojasiewicz-type principles that control
the emergence/vanishing of homological features.

\item \textbf{Geometric and physical templates (structural, not assertive).}
Without claiming classification results, the identities $d_A^2=\ad_{F_A}$ in gauge theory
and $\nabla^2=\ad_R$ in Riemannian settings offer structural testbeds.
\emph{Question:} in discretized or sheaf-theoretic frameworks, isolate regimes where
left/right nilpotency of curvature operators holds (or nearly holds) and quantify the induced
$N$-complex bounds; clarify the role of curvature ideals in these contexts.

\end{enumerate}

\paragraph{Outlook.}
The techniques developed here---normal form for iterates, curvature-constraint hierarchy,
and the curved-to-dg lift---appear robust and modular. We anticipate applications to
quantitative homological algebra, stability of algebraic invariants under controlled curvature,
and principled bridges to applied topological settings. A companion line of work will address
richer examples and sharper constants, as well as functorial enhancements of the curved-to-dg construction.

\section{Declaration of generative AI and AI-assisted technologies in the writing process.}

{\it Statement}: During the preparation of this work, the author used Perplexity and Elicit in order to optimally find references to papers and articles related to the questions that motivate this work and to revise the writing, improve the writing style and the order of the sections and results. After using these tools, the author reviewed and edited the content as needed and assumed full responsibility for the content of the published article.

\bibliographystyle{elsarticle-num-names}
\bibliography{sample}







\end{document}